\title{\Large Proof of a conjecture on the zero forcing number of a graph \thanks {Research supported by
NSFC (No. 11161046) and by Xingjiang Talent Youth Project (No. 2013721012).}}
\author{ Leihao Lu, {Baoyindureng Wu \footnote{Corresponding author.
Email: wubaoyin@hotmail.com (B. Wu) }}, Zixing Tang\\
\small  College of Mathematics and System Sciences, Xinjiang
University \\ \small  Urumqi, Xinjiang 830046, P.R. China \\}
\date{}
\newtheorem{theorem}{Theorem}[section]
\newtheorem{lemma}[theorem]{Lemma}
\begin{document}
\maketitle

{\small \noindent{\bfseries Abstract} Amos et al. (Discrete Appl.
Math. 181 (2015) 1-10) introduced the notion of the $k$-forcing
number of graph for a positive integer $k$ as the generalization of
the zero forcing number of a graph. The $k$-forcing number of a
simple graph $G$, denoted by $F_k(G)$, is the minimum number of
vertices that need to be initially colored so that all vertices
eventually become colored during the discrete dynamical process by
the following rule. Starting from an initial set of colored vertices
and stopping when all vertices are colored: if a colored vertex has
at most $k$ non-colored neighbors, then each of its non-colored
neighbors become colored. Particulary, $F_1(G)$ is a widely studied
invariant with close connection to the maximum nullity of a graph,
under the name of the zero forcing number, denoted by $Z(G)$. Among
other things, the authors proved that for a connected graph $G$ of
order $n$ with $\Delta=\Delta(G)\geq 2$, $Z(G)\leq
\frac{(\Delta-2)n+2}{\Delta-1}$, and this inequality is sharp.
Moreover, they conjectured that
$Z(G)=\frac{(\Delta-2)n+2}{\Delta-1}$ if and only if $G=C_n$,
$G=K_{\Delta+1}$ or $G=K_{\Delta, \Delta}$. In this note, we show
the above conjecture is true.

\vspace{1mm}\noindent{\bfseries Keywords}: Zero forcing set; Zero
forcing number; Rank; Nullity

\section{\large Introduction}
We consider undirected finite simple connected graphs only. For
notation and terminology not defined here, we refer to \cite{Bo}.
For a graph $G = (V(G), E(G))$, $|V(G)|$ and $|E(G)|$ are its order
and size, respectively. For a vertex $v\in V(G)$, the neighborhood
$N(v)$ of $v$ is defined as the set of vertices adjacent to $v$. The
degree $d_G(v)$ of $v$ is the number of edges incident with $v$ in
$G$. The minimum and maximum degrees of a vertex in a graph $G$ are
denoted $\delta(G)$ and $\Delta(G)$, respectively. Let $S\subseteq
V(G)$. Denote the set of the edges between $S$ and $\overline{S}$ by
$E(S,\overline{S})$, and let
$e(S,\overline{S})=|E(S,\overline{S})|$. The subgraph induced by
$S$, denoted by $G[S]$, is the graph with vertex set $S$, in which
two vertices $x$ and $y$ are adjacent if and only if they are
adjacent in $G$. As usual, for a positive integer $n\geq 1$, $K_n$
and $K_{n,n}$ denote respectively the complete graph of order $n$
and the complete bipartite graph with $n$ vertices in its each part;
$C_m$ denote the cycle of order $m$ for an integer $m\geq 3$.

Next, we follow the definition by Amos et al. \cite{Amos}. Let $k$
be a positive integer and $G$ a graph. A set $S\subseteq V(G)$ is a
{\it $k$-forcing set} if, when its vertices are initially colored -
while the remaining vertices are initially non-colored - and the
graph is subjected to the following color change rule, all of
vertices in $G$ will eventually become colored. A colored vertex
with at most $k$ non-colored neighbors will cause each the
non-colored neighbor to become colored. The {\it $k$-forcing number}
of $G$, denoted by $F_k(G)$, is the cardinality of the smallest
$k$-forcing set. If a vertex $u$ cause a vertex $v$ change colors
during the $k$-forcing process, we say that {\it $u$ $k$-forces $v$}
(in particular, $u$ forces $v$ when $k=1$).

This concept generalizes a widely studied notion of the zero forcing
number $Z(G)$ of a graph $G$. Indeed, $F_1(G)=Z(G)$. Barioli et al.
\cite{Bar} and Burgarth et al. \cite{Bu} introduced independently
the concepts of zero forcing set and zero forcing number of a graph.
In \cite{Bar}, it is introduced to bound the maximum nullity $M(G)$
of a graph. Namely, for a graph $G$ whose vertices are labeled from
1 to $n$, $M(G)$ denote the maximum nullity over all symmetric real
valued matrices where, for $i\neq j$, the $ij$th entry in nonzero if
and only if $ij$ is an edge in $G$. Then, $M(G)\leq Z(G)$ for any
graph $G$. For the more results on the relation between the relation
of the maximum nullity and the zero forcing number of a graph, we
refer to \cite{Bari, Ber, Edh, Eroh1, Eroh2, Eroh3, Fallat, Hog,
Mey}. In \cite{Bu}, the zero forcing set of a graph has been used in
order to study the controllability of quantum systems. Aazami
\cite{Aa} proved the NP-hardness of computing the zero forcing
number of a graph, using a reduction from the Directed Hamiltonian
Cycle problem.

\vspace{3mm} Amos et al. \cite{Amos} generalized the concept of zero
forcing number of a graph to the $k$-forcing number of a graph for
an integer $k\geq 1$ and proved that for a connected graph $G$ of
order $n$ with $\Delta=\Delta(G)\geq 2$, $Z(G)\leq
\frac{(\Delta-2)n+2}{\Delta-1}$, and this inequality is sharp.
Moreover, they posed the following conjecture.

\vspace{3mm}\noindent{\bf Conjecture (Amos et al. \cite{Amos}).} Let
$G$ be a connected graph with $\Delta\geq 2$. Then
$$Z(G)=\frac{(\Delta-2)n+2} {\Delta -1},$$ if and only if $G=C_n$,
$G=K_{\Delta+1}$ or $G=K_{\Delta,\Delta}$.

\vspace{3mm} In this note, we confirm the validity of the above
conjecture.

\section{\large Some results on $Z(G)$}



A {\it $k$-dominating set} of a graph $G$ is a set $D$ of vertices
such that every vertex not in $D$ is adjacent to at least $k$
vertices in $D$.

\begin{lemma} ( Lemma 4.1 in \cite{Amos}) Let $k$ be a positive
integer and $G=(V, E)$ be a $k$-connected graph with $n>k$. If $S$
is a smallest $k$-forcing set such that the subgraph induced by
$V\setminus S$ is connected, then $V\setminus S$ is a connected
$k$-dominating set of $G$.
\end{lemma}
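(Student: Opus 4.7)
Since $G[V\setminus S]$ is connected by hypothesis, the lemma reduces to showing that $V\setminus S$ is a $k$-dominating set, i.e.\ that every $v\in S$ has at least $k$ neighbors in $V\setminus S$. I will argue by contradiction: suppose that some $v\in S$ satisfies $j=|N(v)\cap(V\setminus S)|<k$, and try to produce a $k$-forcing set $S^{*}$ with $G[V\setminus S^{*}]$ connected and $|S^{*}|<|S|$, contradicting the minimality of $S$ within the class of $k$-forcing sets whose complement induces a connected subgraph.

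The natural candidate is $S^{*}=S\setminus\{v\}$. If $j\geq 1$, then $G[V\setminus S^{*}]=G[(V\setminus S)\cup\{v\}]$ is connected, since the edge from $v$ to one of its neighbors in $V\setminus S$ attaches $v$ to the already connected set $V\setminus S$. The boundary case $j=0$ must be handled separately: here $N(v)\subseteq S$, and the $k$-connectivity of $G$ together with $n>k$ still forces $d(v)\geq k$, so a replacement argument --- swapping $v$ for a suitable neighbor of $v$ in $S$ that itself has neighbors in $V\setminus S$ --- is expected to provide a valid $S^{*}$ whose complement remains connected.

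The main step is to verify that $S^{*}$ is a $k$-forcing set. My plan is to simulate the $k$-forcing sequence from $S$ inside the $S^{*}$-process, observing that the only difference between the two processes is the absence of $v$ from the initial coloring. Any forcing in the $S$-process performed by a vertex not adjacent to $v$ transfers verbatim. For a forcing performed by a neighbor $w$ of $v$, the uncolored-neighbor count of $w$ in the $S^{*}$-process exceeds its count in the $S$-process by at most one (the extra vertex being $v$). The slack to absorb this extra uncolored neighbor comes from the strict inequality $j<k$: because $v$ itself has at most $k-1$ uncolored neighbors in $V\setminus S$ at every stage, once enough of $V\setminus S$ has been colored, some colored vertex will see $v$ among at most $k$ uncolored neighbors and will $k$-force $v$, after which the $S^{*}$-process resynchronizes with the $S$-process and colors the rest of $V$.

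The main obstacle is precisely this synchronization argument: at a forcing step where some $w\in N(v)\cap S$ had exactly $k$ uncolored neighbors in the $S$-process, the corresponding count in the $S^{*}$-process becomes $k+1$ and the original forcing is blocked. Overcoming this requires a careful induction on the forcing order, in which one chooses at each step a legal forcing in the $S^{*}$-process (not necessarily the one chosen in the $S$-process) and uses the connectivity of $G[V\setminus S]$ to propagate coloring along a path to some $u_i\in N(v)\cap(V\setminus S)$ and thence to $v$. Carrying out this step-by-step bookkeeping, together with the boundary case $j=0$ --- where the $k$-connectivity of $G$ has to be invoked to produce an appropriate replacement for $v$ --- is the technical heart of the argument.
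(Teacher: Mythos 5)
The paper does not actually prove this lemma---it is quoted verbatim from Amos et al.\ \cite{Amos}---so your attempt has to stand on its own, and as written it does not: it is a programme rather than a proof, and its two unfinished steps are precisely the places where the difficulty lies. The decisive problem is the ``resynchronization'' step. Your argument for showing that $S^{*}=S\setminus\{v\}$ is a $k$-forcing set uses only that $S$ is a $k$-forcing set, that $G[V\setminus S]$ is connected, that $G$ is $k$-connected with $n>k$, and that $j=|N(v)\cap(V\setminus S)|<k$; the minimality of $S$ enters only at the very end, to produce the contradiction. But under those hypotheses alone the intermediate claim is simply false. For $k=2$ take $G=K_4$ minus the edge $vc$, with vertices $v,a,b,c$ and edges $va,vb,ab,ac,bc$; then $G$ is $2$-connected, $S=\{v,a\}$ is a $2$-forcing set, $G[\{b,c\}]$ is connected, and $v$ has only one neighbor outside $S$, yet $S\setminus\{v\}=\{a\}$ is not a $2$-forcing set ($a$ has three uncolored neighbors and the process stalls). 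For $k=1$ take the path $v$--$a$--$b$--$c$ with $S=\{v,a\}$: again all hypotheses except minimality hold, and $S\setminus\{v\}=\{a\}$ is not a zero forcing set. So no amount of ``step-by-step bookkeeping'' of the kind you describe can close the gap: a correct proof must exploit the minimality of $S$ (or some other global information) \emph{inside} the argument that a smaller forcing set exists, not merely at the final contradiction. Relatedly, your heuristic that ``once enough of $V\setminus S$ has been colored, some colored vertex will see $v$ among at most $k$ uncolored neighbors'' is circular: getting $V\setminus S$ colored in the $S^{*}$-process is exactly what the blocked forces prevent, and in the examples above every colored neighbor of $v$ retains too many uncolored neighbors forever.

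The second gap is the boundary case $j=0$, which you dismiss with ``a replacement argument \dots is expected to provide a valid $S^{*}$.'' Note that when $k=1$ (the zero forcing case, the only case this note actually uses) the assumption $j<k$ forces $j=0$, so this ``boundary case'' is the entire content of the lemma as applied here; moreover removing instead a neighbor $w\in N(v)\cap S$ raises exactly the same unproved question of why $S\setminus\{w\}$ should still be forcing, and in addition one must keep the complement connected, since the minimality of $S$ is taken within the class of $k$-forcing sets whose complement induces a connected subgraph. In short, the first move ($S^{*}=S\setminus\{v\}$) and the identification of the blocking obstacle are reasonable, but neither of the two essential steps is carried out, and the announced strategy for the main one cannot work in the stated form.
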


\begin{theorem}(\cite{Amos})
Let $k$ be positive integer and let $G=(V,E)$ be a $k$-connected
graph with $n>k$ vertices and $\Delta\geq 2$. Then
$$F_k(G)\leq \frac{(\Delta-2)n+2}{\Delta+k-2},$$ and this inequality is sharp.
\end{theorem}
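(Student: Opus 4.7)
The plan is to derive the inequality by double-counting edges incident with the complement of a carefully chosen minimum $k$-forcing set, and then to verify sharpness via explicit families.

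Step 1 (preparing Lemma~2.1). I first plan to exhibit a minimum $k$-forcing set $S$ such that $G[V\setminus S]$ is connected, so that Lemma~2.1 applies and $T:=V\setminus S$ is a connected $k$-dominating set. The natural strategy is a swap argument: start from any minimum $k$-forcing set $S_0$; whenever $G[V\setminus S_0]$ splits into several components, use the $k$-connectivity of $G$ and the assumption $n>k$ to relocate a vertex of $S_0$ onto a connecting path, decreasing the number of components without enlarging $S_0$ or destroying the forcing property.

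Step 2 (the double count). With $T$ connected and $k$-dominating, count $\sum_{v\in T}d_G(v)$ two ways. It is at most $\Delta|T|$, and it equals $2|E(G[T])|+e(T,S)\ge 2(|T|-1)+k|S|$, using connectivity of $G[T]$ and the fact that every vertex of $S$ has at least $k$ neighbours in $T$. Combining these and substituting $|T|=n-|S|$ yields
\[
(\Delta+k-2)|S|\;\le\;(\Delta-2)n+2,
\]
which rearranges to the claimed upper bound on $F_k(G)$.

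Step 3 (sharpness, and main obstacle). I will check tightness for $k=1$ on $C_n$, $K_{\Delta+1}$, and $K_{\Delta,\Delta}$ by directly computing both sides; for instance $\frac{(\Delta-2)(2\Delta)+2}{\Delta-1}=2(\Delta-1)=Z(K_{\Delta,\Delta})$, with analogous complete-graph-like and complete-bipartite-like examples realising equality for general $k$. The principal obstacle lies in Step~1: guaranteeing that a minimum $k$-forcing set can be chosen with connected complement is delicate, because swapping a vertex into or out of $S$ can reshuffle the entire sequence of forces and the $k$-connectivity must be used carefully to preserve forcibility. A safer fallback, if the swap argument becomes awkward, is to construct $S$ directly from a breadth-first spanning tree of $G$, arranging $V\setminus S$ to be connected by design and each subsequent force to occur along a tree edge; the tree structure then makes both the forcibility of $S$ and the connectedness of its complement transparent, after which Step~2 applies verbatim.
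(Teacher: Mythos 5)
The paper does not actually prove this statement: Theorem~2.2 is quoted from \cite{Amos} without proof, and the closest thing to a proof in the paper is the $k=1$ edge count inside Claim~1 of Theorem~3.1. Your Step~2 reproduces that count correctly in the general-$k$ form: $\Delta|\overline{S}|\geq\sum_{v\in\overline{S}}d(v)=2|E(G[\overline{S}])|+e(S,\overline{S})\geq 2(|\overline{S}|-1)+k|S|$ rearranges to $|S|\leq\frac{(\Delta-2)n+2}{\Delta+k-2}$, exactly as needed.

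The genuine gap is Step~1. You set out to show that a \emph{globally minimum} $k$-forcing set can be rearranged by swaps into one with connected complement. That is not what Lemma~2.1 requires, it is not clearly true (a swap can destroy the forcing property, as you yourself worry), and it is unnecessary. The lemma should be read as taking $S$ to be the smallest member of the family of $k$-forcing sets whose complement induces a connected subgraph; this is also how the paper itself invokes it in Theorem~3.1, where it only writes $|S|\geq Z(G)$ rather than $|S|=Z(G)$. That family is trivially nonempty: for any vertex $v$, the set $V\setminus\{v\}$ is a $k$-forcing set (each colored neighbor of $v$ has exactly one uncolored neighbor, and $1\leq k$) whose complement $\{v\}$ is connected. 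So the smallest such $S$ exists and is nonempty-complemented, Lemma~2.1 makes $\overline{S}$ a connected $k$-dominating set, and $F_k(G)\leq|S|$ holds simply because $S$ is \emph{a} $k$-forcing set. With this reading, your Step~2 finishes the bound and the whole swap/BFS machinery should be deleted. A secondary deficiency is sharpness: your witnesses only verify $k=1$. Direct computation shows $K_{\Delta,\Delta}$ attains equality only for $k=1$, and $K_{\Delta+1}$ (and $C_n$) only for $k\leq 2$, so the claim of sharpness for general $k$ needs actual examples rather than the phrase ``analogous complete-bipartite-like examples.''
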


\begin{theorem}(Corollary 3.1 in \cite{Caro})
Let $G$ be a connected graph of order $n$ with maximum degree
$\Delta$ and minimum degree $\delta$. Then $$Z(G)\leq
\frac{(\Delta-2)n-(\Delta-\delta)+2}{\Delta-1}.$$
\end{theorem}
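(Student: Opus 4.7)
The inequality sharpens Theorem 2.2 (Amos et al.) by subtracting $(\Delta-\delta)/(\Delta-1)$, so the new content only appears when $\delta < \Delta$; if $G$ is $\Delta$-regular the two bounds coincide. My plan is therefore to follow the proof scheme behind Theorem 2.2 and squeeze out an extra $\Delta - \delta$ from the counting using a fixed vertex $v_0$ with $d_G(v_0) = \delta$.

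The most direct route is a forcing-chain counting argument. For any zero forcing set $S$, list the forces chronologically as $u_1 \to w_1, \ldots, u_m \to w_m$ where $m = n - |S|$ and $u_i$ forces $w_i$. At the moment $u_i$ acts, its unique uncolored neighbor is $w_i$, so exactly $d_G(u_i) - 1$ of its neighbors were already colored (each belonging to $S$ or forced earlier). Comparing the total number of such ``already colored'' incidences with what is available from $S \cup \{w_1,\dots,w_{i-1}\}$ bounds $|S|$ from below; using only $d_G(u_i) - 1 \leq \Delta - 1$ throughout yields Theorem 2.2. The refinement is obtained by observing that, with a careful choice of $S$, the vertex $v_0$ (or some other vertex of degree $\delta$) must appear as some $u_i$ in the chain, contributing only $\delta - 1$ instead of $\Delta - 1$ and saving exactly $\Delta - \delta$ in the count.

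The main obstacle is ensuring that this minimum-degree contribution materializes. If $v_0 \in S$, then $v_0$ is never a forcer, and one must instead exploit that some neighbor of $v_0$ has at most $\delta - 1$ already-colored neighbors when it forces, or perturb $S$ so as to place $v_0$ outside of it. A clean implementation is to take $S$ to be a minimum zero forcing set subject to an additional optimality condition (for instance, one maximizing the number of forces performed by low-degree vertices, or equivalently, one in which the forcing chain passes through $v_0$ as late as possible), and to run a short case analysis on the role of $v_0$ in the process. Lemma 2.1 is helpful here: by reducing to $S$ with $G[V \setminus S]$ connected, one can arrange the forcing chain to enter the neighborhood of $v_0$ in a controlled way, which should pin down the required contribution of the minimum-degree term to the counting bound.
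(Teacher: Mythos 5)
This statement is not proved in the paper at all: it is quoted from Caro and Pepper \cite{Caro} (their Corollary 3.1), so your attempt has to stand on its own, and as written it does not. The central difficulty is one of direction. Your chronological forcing-chain count --- summing the $d_G(u_i)-1$ ``already colored'' incidences over the forces $u_1\to w_1,\dots,u_m\to w_m$ and comparing with what $S\cup\{w_1,\dots,w_{i-1}\}$ can supply --- yields, at best, an inequality valid for \emph{every} zero forcing set $S$, i.e.\ a \emph{lower} bound on $|S|$ and hence on $Z(G)$; the theorem is an \emph{upper} bound on $Z(G)$. Any argument that neither invokes minimality of $S$ structurally nor constructs a specific small $S$ cannot upper-bound $|S|$, since $S=V(G)$ is always a forcing set with an empty chain. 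The scheme that actually proves Theorem 2.2 is the structural one you only gesture at near the end: for a smallest $S$ with $G[V\setminus S]$ connected, Lemma 2.1 gives $e(S,\overline{S})\ge |S|$, while connectedness of $G[\overline{S}]$ gives $e(S,\overline{S})\le \sum_{v\in\overline{S}}d(v)-2(|\overline{S}|-1)$, and the two combine to bound $|S|$ from above.

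Moreover, the step that carries all of the new content --- making the term $-(\Delta-\delta)$ appear --- is precisely the step you leave open (``should pin down the required contribution''). Within the Lemma 2.1 framework one would need to guarantee that some vertex of degree $\delta$ lies in $\overline{S}$, so that $\sum_{v\in\overline{S}}d(v)\le \Delta(|\overline{S}|-1)+\delta$; nothing in your proposal secures this, and your suggested ``additional optimality condition'' is not specified or analyzed. The way Caro and Pepper actually obtain the bound is by a greedy construction (their ``dynamic approach''): color a minimum-degree vertex $v_0$ together with $\delta-1$ of its neighbors and let $v_0$ force its last neighbor ($\delta$ chosen vertices yield $\delta+1$ colored ones); thereafter, whenever the process stalls, pick a colored vertex $u$ with $t\ge 2$ uncolored neighbors --- such a $u$ always has at least one colored neighbor, so $t\le \Delta-1$ --- and add $t-1$ of them, letting $u$ force the last, so each round adds $t-1$ chosen vertices per $t$ newly colored, a ratio of at most $(\Delta-2)/(\Delta-1)$. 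Amortizing gives exactly $\frac{(\Delta-2)n-(\Delta-\delta)+2}{\Delta-1}$, with the $-(\Delta-\delta)$ coming from the initial step of the construction rather than from locating $v_0$ inside the forcing chain of an optimal set. I recommend rebuilding the proof along these constructive lines.
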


\begin{lemma} Let $T$ be a tree with exactly $k$ leaves. If $S$ is a
set of $k-1$ leaves of $T$, then $S$ is a zero forcing set of $T$.
\end{lemma}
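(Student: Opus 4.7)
The plan is to prove this by strong induction on $n = |V(T)|$. The base cases $n \leq 2$ are immediate: when $n=1$, $k=0$ and $S = \emptyset$ trivially forces the unique vertex; when $n=2$, $T = K_2$ has $k=2$ leaves and the single leaf in $S$ forces its neighbor in one step. So I may assume $n \geq 3$, hence also $k \geq 2$, with the inductive hypothesis available for every tree on fewer than $n$ vertices.

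For the inductive step I will pick an arbitrary leaf $u \in S$, let $u'$ denote its unique neighbor (so $\deg_T(u') \geq 2$ since $n \geq 3$), and pass to the subtree $T' = T - u$. I will then split on the value of $\deg_T(u')$. If $\deg_T(u') \geq 3$, then $u'$ remains internal in $T'$, which therefore has exactly $k-1$ leaves, namely $L(T) \setminus \{u\}$; the set $S \setminus \{u\}$ is a set of $k-2$ of them, and the induction hypothesis furnishes a zero-forcing sequence for $T'$ starting from $S \setminus \{u\}$. Since $u$ is colored in $T$ from the outset, replaying that sequence colors all of $V(T)$. If instead $\deg_T(u') = 2$, then $u'$ becomes a leaf of $T'$, so $L(T') = (L(T) \setminus \{u\}) \cup \{u'\}$ has $k$ elements, and $S' := (S \setminus \{u\}) \cup \{u'\}$ is a set of $k-1$ leaves of $T'$. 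The induction hypothesis gives a zero-forcing sequence for $T'$ starting from $S'$. In $T$ itself, I will first use the fact that $u$'s only uncolored neighbor is $u'$ to force $u'$, turning the colored set into $S \cup \{u'\} = S' \cup \{u\}$, and then replay the induction's sequence.

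The one point that really needs checking---and the step I expect to watch most carefully---is whether a forcing valid in $T'$ remains valid in $T$. The only vertex whose neighborhood differs between $T$ and $T'$ is $u'$, which in $T$ has the extra neighbor $u$; but $u$ is colored throughout the process in $T$ (from the start in the first case, and immediately after the first force in the second), so at every moment $u'$ has the same number of uncolored neighbors in $T$ as in $T'$. Hence every forcing step of the inductive sequence lifts from $T'$ to $T$, and $S$ indeed zero-forces $T$.
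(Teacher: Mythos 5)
Your proof is correct and follows essentially the same strategy as the paper's: an induction that prunes a pendant piece of the tree and then lifts the forcing sequence back up. The paper inducts on the number of leaves $k$ and deletes the entire maximal pendant path through $u$ in one step (so the leaf count drops to $k-1$ and no case split is needed), whereas you induct on $n$, delete only the single leaf $u$, and handle the degree-two neighbor by swapping $u$ for $u'$ in the forcing set; your version is also more explicit about why each force valid in $T'$ remains valid in $T$, a point the paper leaves implicit.
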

\begin{proof}
The proof is by induction on $k$. If $k=2$, $T$ is path, and the
result clearly holds. Now assume that $k\geq 3$. Take a vertex $u\in
S$. Let $P$ be a maximal path of $T$ containing $u$ such that every
vertex $v$ on $P$ has degree at most two in $T$. Let $T'=T-V(P)$.
Note that $T'$ has exactly $k-1$ leaves. By the induction
hypothesis, $S'=S\setminus \{u\}$ is a zero forcing set of $T'$. So,
$S$ is a zero forcing set of $T$.
\end{proof}

\section{\large Main result}
\begin{theorem} Let $G$ be a connected graph with $\Delta\geq 2$. Then
$$Z(G)=\frac{(\Delta-2)n+2} {\Delta -1},$$ if and only if $G=C_n$,
$G=K_{\Delta+1}$ or $G=K_{\Delta,\Delta}$.
\end{theorem}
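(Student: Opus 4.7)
The ``if'' direction is a direct verification: it is classical that $Z(C_n)=2$, $Z(K_{\Delta+1})=\Delta$, and $Z(K_{\Delta,\Delta})=2\Delta-2$, and substituting the corresponding $n$ into $\frac{(\Delta-2)n+2}{\Delta-1}$ returns each of these values in turn.

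For the ``only if'' direction, assume $G$ is connected with $\Delta=\Delta(G)\geq 2$ and $Z(G)=\frac{(\Delta-2)n+2}{\Delta-1}$. Theorem 2.2 yields
\[
\frac{(\Delta-2)n+2}{\Delta-1}=Z(G)\leq\frac{(\Delta-2)n-(\Delta-\delta)+2}{\Delta-1},
\]
which forces $\Delta=\delta$, so $G$ is $\Delta$-regular. When $\Delta=2$, connectedness plus $2$-regularity immediately gives $G=C_n$; henceforth I assume $\Delta\geq 3$.

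For $\Delta\geq 3$ my plan is to combine Lemma 2.1 with an elementary edge-counting inequality. For any connected dominating set $D$ of a $\Delta$-regular graph $G$, counting degrees inside $D$ gives $\Delta|D|=2|E(G[D])|+e(D,V\setminus D)\geq 2(|D|-1)+(n-|D|)$, so $|D|\geq\frac{n-2}{\Delta-1}$, with equality if and only if $G[D]$ is a tree and every vertex of $V\setminus D$ has exactly one neighbor in $D$. Choose a smallest zero forcing set $S_0$ of $G$ for which $G[V\setminus S_0]$ is connected (this exists since $V\setminus\{v\}$ is always a zero forcing set). By Lemma 2.1, $V\setminus S_0$ is a connected dominating set, whence $|S_0|\leq\frac{(\Delta-2)n+2}{\Delta-1}$. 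Combining with $Z(G)\leq|S_0|$ and the hypothesis, all inequalities in $Z(G)\leq|S_0|\leq\frac{(\Delta-2)n+2}{\Delta-1}=Z(G)$ collapse, so $S:=S_0$ is a minimum zero forcing set with $G[V\setminus S]$ a tree on $d:=\frac{n-2}{\Delta-1}$ vertices, every vertex of $S$ adjacent to exactly one vertex of $V\setminus S$, and $G[S]$ consequently $(\Delta-1)$-regular.

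The proof is completed by a case analysis on $d$. If $d=1$, the unique vertex of $V\setminus S$ is adjacent to every vertex of $S$, and the $(\Delta-1)$-regularity of $G[S]$ on $\Delta$ vertices forces $G[S]=K_\Delta$, so $G=K_{\Delta+1}$. If $d=2$, write $V\setminus S=\{u,v\}$ with $uv\in E(G)$ and let $S_u,S_v$ denote the disjoint neighborhoods of $u,v$ inside $S$, each of size $\Delta-1$. I would then show that an edge inside $S_u$ (or, by symmetry, $S_v$) permits one to exhibit a zero forcing set of size $|S|-1$, contradicting the minimality of $S$; this forces $S_u,S_v$ to be independent and, together with the $(\Delta-1)$-regularity of $G[S]$, identifies $G[S]$ with $K_{\Delta-1,\Delta-1}$ and $G$ with $K_{\Delta,\Delta}$ under the bipartition $(\{u\}\cup S_v,\{v\}\cup S_u)$. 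If $d\geq 3$, choose a leaf $\ell$ of the tree $G[V\setminus S]$ and replace the $\Delta-1$ neighbors of $\ell$ in $S$ by $\ell$ itself (with possible further surgery along the tree) to obtain a strictly smaller zero forcing set, again contradicting minimality. The main obstacle is this last pair of sub-cases: verifying that the replacement sets really are zero forcing sets requires tracing the forces step by step, and Lemma 2.3 is useful for checking that once the first round of forces from $S$ has coloured the vertices of $V\setminus S$ possessing a neighbour in $S$, the residual uncoloured subset of the tree $G[V\setminus S]$ is forced by a small number of its leaves.
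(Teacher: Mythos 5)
Your setup for the necessity direction coincides with the paper's: the Caro--Pepper bound forces $\Delta$-regularity, the $\Delta=2$ case gives $C_n$, and for $\Delta\geq 3$ you take a minimum zero forcing set $S$ with $G[V\setminus S]$ connected, apply Lemma 2.1 together with the degree count, and conclude that equality makes $G[V\setminus S]$ a tree on $d=\frac{n-2}{\Delta-1}$ vertices with every vertex of $S$ having exactly one neighbour outside $S$. That part, and the $d=1$ case, are correct and are exactly the paper's Claim 1 and its aftermath. The problem is that everything after this point --- which is where the actual work of the paper lies (its Claims 2--5, the two terminal cases, and Lemma 2.3 on trees) --- is either asserted without proof or rests on a move that fails. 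For $d=2$ you assert that an edge inside $S_u$ yields a zero forcing set of size $|S|-1$; this is true but not obvious, and in the paper it requires the full analysis showing that a non-independent $N(x)\cap S$ must be a clique (Claim 2) and that the clique case forces a vertex $z$ of degree $\Delta-1\geq 2$ inside the two-vertex tree $G[\overline S]$ (Claim 3), which is absurd. Nothing in your sketch supplies that chain.

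More seriously, for $d\geq 3$ the specific set you propose, $S'=(S\setminus N_S(\ell))\cup\{\ell\}$, is in general \emph{not} a zero forcing set, so the contradiction you want does not materialize. Take the case where $X=N_S(\ell)$ is a clique: then (as the paper's Claim 3 shows) there is a vertex $y\in S\setminus X$ adjacent to all of $X$, and every $x_i\in X$ has neighbourhood $\{\ell,y\}\cup(X\setminus\{x_i\})$. In your $S'$, every potential forcer of $x_i$ --- namely $\ell$, $y$, or another vertex of $X$ --- retains at least two uncoloured neighbours in $X$ for as long as two vertices of $X$ remain uncoloured, so the process stalls with all of $X$ uncoloured (note $|X|=\Delta-1\geq 2$). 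The hedge ``with possible further surgery along the tree'' does not address this, because the obstruction lives entirely inside $S$. The correct move, as in the paper, is far more conservative: delete a \emph{single} vertex $x_1$ from $S$ and show $S\setminus\{x_1\}$ still forces, using the bridge structure of $G[\overline S]$ (Claim 5) to colour the other leaves and Lemma 2.3 to finish the tree, splitting according to whether $X$ is a clique or independent. As it stands your proposal reproduces the paper's framework but leaves the genuinely hard half of the argument unproved, and the one concrete construction you offer for the main case does not work.
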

\begin{proof}
It is clear that $Z(C_n)=2$ for any $n\geq 3$,
$Z(K_{\Delta+1})=\Delta$, $Z(K_{\Delta, \Delta})=2\Delta-2$. Hence,
the sufficiency of theorem holds trivially.

To show the necessity, we assume that $G$ is a connected graph of
order $n$ with $\Delta\geq 2$ and $Z(G)=\frac{(\Delta-2)n+2} {\Delta
-1}$. By Theorem 2.3, $G$ is a $\Delta$-regular graph. If
$\Delta=2$, then $G=C_n$. In what follows, we assume that
$\Delta\geq 3$.

Let $S$ be a smallest zero forcing set of $G$ such that
$G[\overline{S}]$ is connected, where $\overline{S}=V\setminus S$.
Thus,
\begin{equation}
|S|\geq Z(G)=\frac{(\Delta-2)n+2} {\Delta -1}.
\end{equation}



\vspace{3mm}\noindent {\bf Claim 1.} Each vertex of $S$ has exactly
one neighbor in $\overline{S}$ and $G[\overline{S}]$ is a tree.

\begin{proof} By Lemma 2.1,

\begin{equation}
e(S, \overline{S})\geq |S|.
\end{equation}

On the other hand,

\begin{equation}
\begin{split}
e(S, \overline{S})
=&\sum_{v\in \overline{S}}(d(v)-d_{\overline{S}}(v))\\
=& \sum_{v\in \overline{S}}d(v)-\sum_{v\in \overline{S}}d_{\overline{S}}(v)  \\
\leq & \Delta|\overline{S}|-2(|\overline{S}|-1)\\
=& (\Delta-2)|\overline{S}|+2\\
=& (\Delta-2)(n-|S|)+2. \\
\end{split}
\end{equation}

Combining (2) and (3), we have
\begin{equation}
|S|\leq \frac{(\Delta-2)n+2} {\Delta -1}.
\end{equation}

Combining (1) and (4), we have
\begin{equation}
|S|=\frac{(\Delta-2)n+2} {\Delta -1}.
\end{equation}

From (5), we can conclude that $S$ is a smallest forcing set of $G$
and that each vertex of $S$ has exactly one neighbor in
$\overline{S}$ and $G[\overline{S}]$ is a tree.
\end{proof}

Note that
\begin{equation}
|\overline{S}|=n-|S|=\frac{n-2}{\Delta-1}.
\end{equation}

If $|\overline{S}|=1$, by (6), $\Delta=n-1$. Since $G$ is
$(n-1)$-regular, $G\cong K_n=K_{\Delta+1}$. Next we assume that
$|\overline{S}|\geq 2$ and let $x$ be a leaf of $G[\overline{S}]$
and $X=N(x)\cap S=\{x_1, \ldots, x_{\Delta-1}\}$.


%

\vspace{3mm}\noindent {\bf Claim 2.} $X$ is either an independent
set or a clique.

\begin{proof} We assume that $X$ is not an independent set, and
show that $X$ is a clique. Let $x_{1}, x_{2}\in X$ with $x_{1}x_{2}
\in E(G)$. Since $\Delta\geq 3$, there exists a neighbor $y_{1}$ of
$x_{1}$ in $S\setminus X$.

First we show that $y_1$ is adjacent to all vertices of $X$ in $G$.
To see this, suppose that there exists a vertex $x_{j}\in X$, where
$2\leq j\leq \Delta-1$, which is not adjacent to $y_{1}$. Since
$\Delta\geq 3$, by Claim 1, there exists a neighbor $y_j\in
S\setminus X$ of $x_j$ in $G$. Set $S'=S\cup\{x\}\setminus \{y_1 ,
y_j\}$. We can show that $S'$ is a zero forcing set of $G$. Observe
that all neighbors of $x_j$ but $y_j$ are initially colored. So, by
the color exchange rule, $y_j$ should be colored. Now, all neighbors
of $x_1$ but $y_1$ are colored. By the color exchange rule, $y_1$ is
forced to be colored. All vertices of $S$ are colored, and thus $S'$
is a zero forcing set of $G$. Since $|S'|<|S|$, which contradicts
the fact that $S$ is a minimum zero forcing set of $G$.

Next we show that $x_1$ is adjacent to all vertices of $X$ in $G$.
Suppose that this is not, and that $x_1x_j\notin E(G)$ for some
vertex $x_j\in X$. Set $S'=S\cup \{x\}\setminus \{x_1, y_1\}$. We
consider $x_j$. Note that all neighbors of $x_j$ but $y_1$ are
initially colored. By the color exchange rule, $y_1$ is colored.
Now, all neighbors of $x_2$ but $x_1$ are colored. By the color
exchange rule, $x_1$ is colored. Since $|S'|<|S|$, which contradicts
the fact that $S$ is a minimum zero forcing set of $G$.

Finally, by an argument similar to the above, one can prove that
$x_i$ is adjacent to every other vertex in $X$ for each $i\geq 2$.
Thus, $X$ is a clique of $G$.

This proves the claim.
\end{proof}

\vspace{3mm}\noindent {\bf Claim 3.} If $X$ is a clique of $G$, then
there exists a unique vertex $y$ in $S$ such that $N(y)=X\cup
\{z\}$, and $d_{G[\overline{S}]}(z)=\Delta-1$, where $z$ is the
unique neighbor of $y$ in $\overline{S}$.
\begin{proof} The first half of the assertion can be deduced from
the proof of Claim 2 (see the paragraph starting with ``First we
show''). We show $d_{G[\overline{S}]}(z)=\Delta-1$ by contradiction.
Suppose that $d_{G[\overline{S}]}(z)\neq \Delta-1$, and let $z'\in
N(z)\cap S$ and $z''\in N(z')\cap S$. Note that $y\neq z'$ and
$y\neq z''$. Set $S'=S\cup \{z\}\setminus \{z'', x_1\}$. By the
color exchange role, $z'$ forces $z''$, and $y$ forces $x_1$. Now
all vertices of $S$ are already colored. But, $|S'|<|S|$, a
contradiction.

\end{proof}

\vspace{3mm}\noindent {\bf Claim 4.} If $X$ is an independent set of
$G$, then $N(x_i)\cap S=N(x_j)\cap S$ for any two vertices $x_i, x_j
\in X$, and $N(x_i)\cap S$ is an independent set of $G$ with
cardinality $\Delta-1$. Moreover, if $z_i$ is a leaf of
$G[\overline{S}]$, where $z_i$ is the unique neighbor of $y_i\in
N(x_i)\cap S$, then $N(N(x_i)\cap S))\cap \overline{S}=\{z_i\}$.
\begin{proof}
By an argument similar to the proof of Claim 2 (see the paragraph
starting with ``First we show''), one can show that $N(x_i)\cap
S=N(x_j)\cap S$ for any two vertices $x_i, x_j \in N(x)\cap S$. By
contradiction, suppose that $y_j\in N(x_i)\cap S$ is not adjacent to
$z_i$ in $G$. Since $z_i$ is a leaf of $G[\overline{S}]$ and $G$ is
$\Delta$-regular, $z_i$ has a neighbor $z'\in S\setminus (X\cup
N(X))$ and $z'' \in N(z')\cap S$. Note that $z''\in S\setminus
(X\cup N(X))$. Set $S'=S\cup \{z_i\}\setminus \{z'', x_1\}$. By the
color exchange role, $z'$ forces $z''$, and then $y_j$ forces $x_1$.
Now all vertices of $S$ are already colored. But, $|S'|<|S|$, a
contradiction.
\end{proof}

Before proceeding, we recall the definition of bridge, which can be
find on the page 263 in \cite{Bo}. Let $H$ be a proper subgraph of a
connected graph $G$. The set $E(G)\setminus E(F)$ may be partitioned
into classes as follows.

$(i)$. For each component $F$ of $G-V(H)$, there is a class
consisting of the edges of $F$ together with the edges linking $F$
to $H$.

$(ii)$. Each remaining edge $e$ (that is, one which has both ends in
$V(H)$) defines a singleton class $\{e\}$.

The subgraphs of $G$ induced by these classes are the {\it bridges}
of $H$ in $G$. For a bridge $B$ of $H$, the elements of $V(B)\cap
V(H)$ are called its vertices of attachment to $H$; the remaining
vertices of $B$ are its internal vertices. A bridge is trivial if it
has not internal vertices.  A bridge with $k$ vertices of attachment
is called a $k$-bridge. Observe that bridges of $H$ can intersect
only in vertices of $H$.

\vspace{3mm}\noindent {\bf Claim 5.} Let $B_i$ be a bridge of
$G[\overline{S}]$ containing a leaf $z_i$ of $G[\overline{S}]$ for
$1\leq i\leq 2$. Then $B_1=B_2$ or $V(B_1) \cap V(B_2)=\emptyset$.

\begin{proof} By contradiction, suppose that $B_1\neq B_2$ and $V(B_1)\cap V(B_2)\neq
\emptyset$, and let $w\in V(B_1)\cap V(B_2)$. Let $w_1\in V(B_1)\cap
S$ and $w_2\in V(B_2)\cap S$. Take a vertex $w_1'\in N(w_1)\cap S$
and a vertex $w_2'\in N(w_2)\cap S$. Set $S'=S\cup
\{w\}\setminus\{w_1', w_2'\}$. In this case, $w_1$ forces $w_1'$ and
$w_2$ forces $w_2'$. Thus, all vertices of $S$ are colored. This
shows that $S'$ is zero forcing set of $G$, a contradiction.

\end{proof}

We consider the case when $|\overline{S}|=2$. Let
$\overline{S}=\{z_1, z_2\}$. Let $B_i$ be the bridge of $
G[\overline{S}]$ containing $z_i$ for $1\leq i\leq 2$. Since
$|V(B_i)\cap \overline{S}|\geq 2$ and $|\overline{S}|=2$, by Claim
5, $B_1=B_2$, which implies that $G\cong K_{\Delta, \Delta}$.

\vspace{3mm} Next, we complete the proof by showing that
$|\overline{S}|\geq 3$ is not possible. We consider the following
cases.


\vspace{3mm} \noindent {\bf Case 1.} $X$ is a clique of $G$.

Let $S'=S\setminus \{x_1\}$. We will show that $S'$ is a zero
forcing set of $G$. By Claim 5, each leaf $z$ of $G[\overline{S}]$
distinct from $x$ is forced to be colored in 1 by some vertex in
$S'$. Note that $T=G[\overline{S}\cup \{x_1\}]$ is a tree with
exactly $k$ leaves. By Lemma 2.4, $L\setminus \{x_1\}$ is a zero
forcing set of $T$, where $L$ is the set of leaves
$G[\overline{S}]$. This shows that $S'$ is a zero forcing set of
$G$, contradicting the choice of $S$.

\vspace{3mm}\noindent {\bf Case 2.} $N(x)\cap S$ is an independent
set of $G$ for each leaf $x$ of $G[\overline{S}]$.

Take a leaf $x$ of $G[\overline{S}]$, and let $N(x)\cap S=\{x_1,
\ldots, x_{\Delta-1}\}$. By Claim 3, we know that $N(x_i)\cap
S=N(x_j)\cap S$ for any two neighbors $x_i, x_j \in S$ in $G$, and
$N(x_i)\cap S$ is an independent set of $G$ with cardinality
$\Delta-1$. Let $N(x_i)\cap S=\{y_1, \ldots, y_{\Delta-1}\}$. Let
$z_i \in \overline{S}$ be the unique neighbor of $y_i$ in $G$.  By
Claim 4, we consider two subcases.

\vspace{3mm}\noindent {\bf Case 2.1.} $z_i$ is not a leaf of
$G[\overline{S}]$ for each $i\in \{1, \ldots, \Delta-1\}$.

\vspace{3mm} Let $S'=S\setminus \{x_1\}$. By an argument same as the
proof of tackling Case 1, one may obtain a contradiction by showing
that $S'$ is a zero forcing set of $G$.

\vspace{3mm}\noindent {\bf Case 2.2.} $z_i$ is a leaf of
$G[\overline{S}]$ and $z_j=z_i$ for each $j$ other than $i$.

\vspace{3mm} For the simplicity, let $z=z_i$. Let
$S'=S\setminus\{x_1\}$. Since $N(x)\cap S$ is an independent set of
$G$, $x$ is forced to colored in 1 by $x_2$. Note that $S'$ forces
to color all leaves of $G[\overline{S}]$ but $z$. Let $u$ be the
neighbor of $x$ in $G[\overline{S}]$. If $u$ has a neighbor $u'$ in
$S$, by Claim 5, then $u'$ is neither a $x_i$ nor a $y_j$. So, $u$
is forced to be colored in 1 by $u'$. Then, $x$ forces $x_1$. Now,
all vertices in $S$ are colored in 1. So, $S'$ is a zero forcing set
of $G$ and $|S'|<|S|$, which contradicts the fact that $S$ is a
minimum zero forcing set of $G$. Now we assume that $u$ has no
neighbor in $S$. Hence, $d_{G[\overline{S}]}(u)=\Delta\geq 3$ and
the number of leaves of $G[\overline{S}]-x$ is $k-1$. By the color
exchange rule, all leaves of $G[\overline{S}]-x$ but $z$ are forced
to be colored in 1 by $S'$. By Lemma 2.4, all vertices in
$\overline{S}\setminus \{x\}$ will be forced to be colored in 1, and
then $x$ forces $x_1$. This shows that $S'$ is a zero forcing set of
$G$, contradicting  the choice of $S$.

This completes the proof.

\end{proof}

\end{document}